\documentclass[11pt]{amsart} 
\textwidth=5.5in 
\textheight=7.5in 
\oddsidemargin=0.5in
\evensidemargin=0.5in

\usepackage[curve,v2]{xy}

\usepackage{amssymb,latexsym,amsfonts}

\newcommand{\tensor}{\otimes}

\renewcommand{\H}{\operatorname{Hilb}^2\hspace{-.05in}X}

\newcommand{\Tor}{\operatorname{Tor}}

\newcommand{\wtp}{\widetilde{\P}^n}

\newcommand{\wts}{\widetilde{\Sigma}}

\newcommand{\ses}[3]{0\rightarrow#1\rightarrow#2
   \rightarrow#3\rightarrow0}

\newcommand{\F}{{\mathcal F}}

\newcommand{\E}{{\mathcal E}}

\newcommand{\I}{{\mathcal I}}

\renewcommand{\O}{{\mathcal O}}
\renewcommand{\P}{{\mathbb{P}}}
\newcommand{\PP}{{\mathbb{P}}}

\newcommand{\Z}{{\mathbb{Z}}}

\renewenvironment{proof}{\par \medskip \noindent
{\sc Proof:}}{}

\begingroup
\newtheorem{thm}{Theorem}[section]   
\newtheorem{cor}[thm]{Corollary}     
\newtheorem{lemma}[thm]{Lemma}         
\newtheorem{prop}[thm]{Proposition}  

\newtheorem{conj}[thm]{Conjecture}        

\theoremstyle{definition}
\newtheorem{remark}[thm]{Remark}   

\newtheorem{notterm}[thm]{Notation and Terminology}


 
\endgroup


\begin{document}

\pagenumbering{arabic}

\title{Equations and syzygies of the first secant variety to a smooth curve}

\author[Peter Vermeire]{Peter Vermeire}

\address{Department of Mathematics, 214 Pearce, Central Michigan
University, Mount Pleasant MI 48859}

\email{verme1pj@cmich.edu}

\subjclass[2000]{14F05, 14D20, 14J60}

\date{\today}

\begin{abstract} 
We show that if $C$ is a linearly normal smooth curve embedded by a line bundle of degree at least $2g+3+p$ then the secant variety to the curve satisfies $N_{3,p}$.
\end{abstract}

\maketitle

\section{Introduction}
We work throughout over an algebraically closed field $k$ of characteristic zero.
If $X\subset\P^n$ is a smooth variety, then we let $\Sigma_i(X)$ (or just $\Sigma_i$ if the context is clear) denote the (complete) variety of $(i+1)$-secant $i$-planes.  Though secant varieties are a classical subject, the majority of the work done involves determining the dimensions of secant varieties to well-known varieties.  Perhaps the two most well-known results in this direction are the solution by Alexander and Hirschowitz (completed in \cite{ah}) of the Waring problem for homogeneous polynomials and the classification of the Severi varieties by Zak \cite{zak}.  

More recently there has been great interest, e.g. related to algebraic statistics and algebraic complexity, in determining the equations defining secant varieties (e.g. \cite{ar}, \cite{bcg}, \cite{CGG05a}, \cite{CGG05b}, \cite{CGG07}, \cite{CGG}, \cite{unex}, \cite{cs}, \cite{gss}, \cite{kanev}, \cite{Lan06}, \cite{Lan08}, \cite{LM08}, \cite{LW07}, \cite{LW08}, \cite{sidsul}, \cite{ss}).  In this work, we use the detailed geometric information concerning secant varieties developed by Bertram \cite{bertram}, Thaddeus \cite{thaddeus}, and the author \cite{vermeireflip1} to study not just the equations defining secant varieties, but the syzygies among those equations as well.

It was conjectured in \cite{eks} and it was shown in \cite{ravi} that if $C$ is a smooth curve embedded by a line bundle of degree at least $4g+2k+3$ then $\Sigma_k$ is set theoretically
defined by the $(k + 2)\times (k + 2)$ minors of a matrix of linear forms.  This was extended in \cite{ginensky} where the degree bound was improved to $4g+2k+2$ and it was shown that the secant varieties are \textit{scheme} theoretically cut out by the minors.  It was further shown in \cite{vermeireflip2} that if $X\subset\P^n$ satisfies condition $N_2$ then $\Sigma_1(v_d(X))$ is set theoretically defined by cubics for $d\geq2$.

In \cite{vermeiresecantreg} it was shown that if $C$ is a smooth curve embedded by a line bundle of degree at least $2g+3$ then $\I_{\Sigma_1}$ is $5$-regular, and under the same hypothesis it was shown in \cite{sidver} that $\Sigma_1$ is arithmetically Cohen-Macaulay.  Together with the analogous well-known facts for the curve $C$ itself (\cite{fujita}, \cite{mgreen}, \cite{mumford}) this led to the following conjecture, extending that found in \cite{vermeireflip2}:

\begin{conj}\cite{sidver}
Suppose that $C \subset \P^n$ is a smooth linearly normal curve of degree $d \geq 2g+2k+1+p$, where $p,k\geq0$.  Then 
\begin{enumerate}
\item $\Sigma_k$ is ACM and $\I_{\Sigma_k}$ has regularity $2k+3$ unless $g=0,$ in which case the regularity is $k+2$.  
\item $\beta_{n-2k-1, n+1} = \binom{g+k}{k+1}.$ 
\item $\Sigma_k$ satisfies $N_{k+2,p}$.\qed
\end{enumerate}
\end{conj}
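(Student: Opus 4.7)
The plan is to prove the conjecture by induction on $k$, using the iterated blow-up and flip construction of secant varieties developed by Bertram, Thaddeus, and the author. The base case $k=0$ collects classical results on smooth curves of degree $\geq 2g+1+p$: the ACM property and regularity $3$ (Fujita, Mumford), $\beta_{n-1,n+1}=g$ (Green's theorem on the top Koszul cohomology), and $N_{2,p}$ (Green's $(2g+1+p)$-theorem). The $g=0$ exception matches the rational normal curve, whose regularity drops by one.

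For the inductive step, the first move is to build a smooth resolution $\pi\colon \wts_k \to \Sigma_k$ by iteratively blowing up $\P^n$ along $C$ and then along the proper transforms of $\Sigma_1,\ldots,\Sigma_{k-1}$, and finally flipping down the relevant $\P^k$-bundles so that the strict transform of $\Sigma_k$ becomes the exceptional divisor one contracts last. The central technical step is to prove that $\pi$ is a \emph{rational resolution}: $R^i\pi_*\O_{\wts_k}=0$ and $R^i\pi_*\omega_{\wts_k}=0$ for $i>0$. Rationality then yields the ACM statement in part (1), because $H^i(\Sigma_k,\O_{\Sigma_k}(j)) \cong H^i(\wts_k,\pi^*\O(j))$ and the latter is controlled by Kodaira vanishing on $\wts_k$ together with an explicit decomposition of $\pi^*\O(1)$ as a sum of pullbacks and exceptional divisors. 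The regularity bound $2k+3$ then follows by bounding $H^i(\wts_k,\pi^*\O(j)\otimes\omega_{\wts_k})$ and applying Serre duality, with the sharp hypothesis $d\geq 2g+2k+1+p$ entering exactly when one subtracts the appropriate combination of exceptional divisors to land in the ample cone.

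Parts (2) and (3) are then treated via Koszul cohomology. For part (3), the property $N_{k+2,p}$ translates to the vanishing $K_{p',q}(\Sigma_k,\O_{\Sigma_k}(1))=0$ for $q\geq 2$ and $p'\leq p+1$; I would identify these Koszul groups with cohomology on $\wts_k$ via the projection formula and kill them using Kodaira--Nakano together with the sharp degree hypothesis, reducing inductively to the Koszul vanishing for $\Sigma_{k-1}$ on one exceptional stratum and for $C$ on another. For part (2), the top-row Betti number $\beta_{n-2k-1,n+1}$ is detected by a single exceptional divisor of the resolution, naturally a $\P^k$-bundle over the symmetric product $C^{(k+1)}$; a Riemann--Roch calculation on $C^{(k+1)}$ then produces $\binom{g+k}{k+1}$.

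The principal obstacle is establishing that $\pi$ is a rational resolution and carrying out the associated vanishing theorems at the sharp numerical bound. The difficulty is twofold: one must track how the canonical class, the pulled-back hyperplane class, and each successive exceptional divisor transform under the flips in order to obtain explicit numerical equivalences on $\wts_k$, and one must then verify that the resulting divisor classes sit in the nef or ample cone precisely when $d\geq 2g+2k+1+p$. Without a clean rationality statement and a matching vanishing theorem on $\wts_k$, both the Koszul translation of parts (2) and (3) and the regularity half of part (1) collapse, so essentially all the content of the conjecture is concentrated in this one step.
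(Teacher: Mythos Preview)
The statement you are attempting to prove is a \emph{conjecture}, and the paper does not prove it in full; it establishes only part~(3) for $k=1$ (Theorem~3.5), while parts~(1) and~(2) for $k=1$ are quoted from \cite{sidver}. So there is no ``paper's own proof'' of the full statement to compare against; for $k\geq 2$ the conjecture is open.

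Beyond this, your plan contains a concrete error that already breaks at $k=1$. You want the blow-up $\pi\colon\wts_k\to\Sigma_k$ to be a rational resolution, i.e.\ $R^i\pi_*\O_{\wts_k}=0$ for $i>0$, and you build the ACM and Koszul statements on top of that. But the paper records explicitly (see \ref{maps}) that for $k=1$ one has $R^i\pi_*\O_{\wts}=H^i(C,\O_C)\otimes\O_C$, which is nonzero whenever $g>0$. The paper's entire argument for $k=1$ is organized around this failure: Proposition~\ref{prop: wts trans} does not assert that $H^1(\wts,\pi^*\wedge^aM_L(b))$ vanishes, only that it injects into $H^0(\Sigma,\wedge^aM_L(b)\otimes R^1\pi_*\O_{\wts})$, and the rest of the proof (Proposition~\ref{prop: wts van}, Lemma~\ref{downtohilb}, Theorem~\ref{forcurves}) pushes the computation down to $S^2C$ and then to $C\times C$, where a Lazarsfeld-type filtration (Lemma~\ref{belikerob}) reduces everything to line-bundle cohomology. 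No Kodaira--Nakano vanishing on $\wts$ is invoked.

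So your inductive scheme fails at the base of the induction on $k$ (past $k=0$), and the step you yourself flag as the principal obstacle---rationality of the resolution plus the attendant vanishing---is not merely difficult but false as stated. Any approach to the general conjecture will have to cope with the nontrivial higher direct images supported on the lower secant strata, as the paper does for $k=1$; what you have written is a research outline that has not yet engaged with that difficulty.
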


\begin{remark}\label{whatsthatmean}
Recall \cite{EGHP} that a variety $Z\subset\P^n$ satisfies $\operatorname{N}_{r,p}$ if the ideal of $Z$ is generated in degree $r$ and the syzygies among the equations are linear for $p-1$ steps.  Note that the better-known condition $N_p$ \cite{mgreen} implies $N_{2,p}$.
\end{remark}

By the work of Green and Lazarsfeld \cite{mgreen},\cite{laz}, the conjecture holds for $k=0$.  Further, by \cite{fisher} and by \cite{vBH} it holds for $g\leq1$, and by \cite{sidver} parts (1) and (2) hold for $k=1$.  In this work, we show that part (3) holds for $k=1$ (Theorem~\ref{forcurves}).  Some analogous results for higher dimensional varieties can be found in \cite{vermsechigh}.

Our approach combines the geometric knowledge of secant varieties mentioned above with the well-known Koszul approach of Green and Lazarsfeld.  To fix notation, if $L$ is a vector bundle on a smooth curve $C$, then we let $\E_L=d_*(L\boxtimes\O)$, where $d:C\times C\rightarrow S^2C$ is the natural double cover, and if $\F$ is a globally generated coherent sheaf on a variety $X$, then we have the coherent sheaf $M_{\F}$ defined via the exact sequence $\ses{M_{\F}}{\Gamma(X,\F)}{\F}$.   As we will be interested only in the first secant variety for the remainder of the paper, we write $\Sigma$ for $\Sigma_1$.

\section{Preliminaries}

Our starting point is the familiar:

\begin{prop}\label{basic}
Let $C\subset\P^n$ be a smooth curve embedded by a line bundle $L$ of degree at least $2g+3$.  Then $\Sigma$ satisfies $\operatorname{N}_{3,p}$ if and only if $H^1(\Sigma,\wedge^aM_L(b))=0$, $2\leq a\leq p+1$, $b\geq2$.  
\end{prop}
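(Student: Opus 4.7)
The plan is to recognize the proposition as the standard Green-Lazarsfeld Koszul-cohomology translation of $N_{3,p}$, specialized to $\Sigma$. First I would assemble the setup. Under $\deg L \ge 2g+3$, the results of \cite{sidver} and \cite{vermeiresecantreg} tell us that $\Sigma$ is arithmetically Cohen-Macaulay (so in particular projectively normal) and that $\I_\Sigma$ has regularity $5$. Projective normality identifies $V := H^0(C,L)$ with $H^0(\Sigma, \O_\Sigma(1))$, so on $\Sigma$ we have the tautological exact sequence
$$ 0 \to M_L \to V \otimes \O_\Sigma \to \O_\Sigma(1) \to 0. $$
Taking $a$-th exterior powers and twisting by $\O_\Sigma(b-1)$ produces, for each $a, b \ge 1$, the short exact sequence
$$ 0 \to \wedge^a M_L(b-1) \to \wedge^a V \otimes \O_\Sigma(b-1) \to \wedge^{a-1} M_L(b) \to 0, $$
which is what drives the cohomological calculation.

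Next I would invoke the standard criterion (see \cite{EGHP} or \cite{mgreen}): for an ACM subvariety $\Sigma \subset \P^n$, the property $N_{3,p}$ is equivalent to the vanishing of the Koszul cohomology groups $K_{i,q}(\Sigma, \O_\Sigma(1))$ for $1 \le i \le p$ and $q \ge 3$, with the boundary ranges $q \le 2$ absorbed by projective normality and the known degree-$3$ generation of $\I_\Sigma$. Chasing cohomology through the displayed short exact sequence, and using the ACM vanishings $H^i(\O_\Sigma(b-1)) = 0$ for $i \ge 1$ and $b \ge 2$, identifies $K_{i,b}(\Sigma, \O_\Sigma(1)) \cong H^1(\Sigma, \wedge^{i+1} M_L(b-1))$. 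The reindexing $a = i+1$, $b' = b-1$ then converts the range $1 \le i \le p$, $b \ge 3$ into $2 \le a \le p+1$, $b' \ge 2$, which is precisely the vanishing range listed in the proposition.

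The main (and essentially only) point requiring real care is the treatment of the low-range Koszul groups at $q \le 2$: one must check that these vanish automatically, using projective normality and the cubic generation of $\I_\Sigma$ coming from \cite{sidver} and \cite{vermeiresecantreg}. Everything else is routine bookkeeping in the Green-Lazarsfeld long exact sequences, where the ACM hypothesis is precisely what makes the long exact sequence collapse to the single $H^1$ term.
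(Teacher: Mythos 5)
Your argument is correct and is essentially the paper's proof: both rest on the ACM property of $\Sigma$ from \cite{sidver} together with the standard identification of the graded Betti numbers of the homogeneous coordinate ring of $\Sigma$ with the groups $H^1(\Sigma,\wedge^a M_L(b))$. The paper simply cites \cite[5.8]{eisenbud} for the exact sequence $0 \to \Tor_{a-1}(F,k)_{a+b} \to H^1(\Sigma,\wedge^a M_L(b)) \to \wedge^a\Gamma(\O(1))\otimes H^1(\O_\Sigma(b))$ and kills the right-hand term by ACM-ness, where you re-derive the same identification by hand from the exterior powers of the tautological sequence.
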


\begin{proof}
Because $L$ also induces an embedding $\Sigma\subset\P^n$, we abuse notation and denote the associated vector bundle on $\Sigma$ by $M_L$.
Letting $F=\oplus\Gamma(\Sigma,\O_{\Sigma}(n))$ and
applying \cite[5.8]{eisenbud} to $\O_{\Sigma}$ gives the exact sequence:
\[
0 \to \Tor_{a-1}(F, k)_{a+b} \to H^1(\Sigma,\wedge^{a} M_L(b)) \to H^1(\Sigma,\wedge^{a}\Gamma(\O(1)) \otimes \O_{\Sigma}(b))
\]
As $\Sigma$ is ACM \cite{sidver}, the term on the right vanishes.  
\nopagebreak \hfill $\Box$ \par \medskip
\end{proof}

\begin{notterm}\label{maps}
Under the hypothesis that $\deg(L)\geq 2g+3$, the reader should keep in mind throughout the following morphisms \cite{vermeireflip1}
\begin{center}
{\begin{minipage}{1.5in}
\diagram
 & & S^2C  \\
& Z=C \times C  \urto^{d=\varphi|_{Z}} \dlto^{\pi_2} \dto^{\hspace{-0.04in} \pi_1 = \pi|_{Z}} \ar@{^{(}->}[r]^{\hspace{.2in}i}  & \wts \uto_{\varphi} \dto^{\pi}  \\
C & C  \ar@{^{(}->}[r]  &\Sigma   
\enddiagram
\end{minipage}}
\end{center}
where 
\begin{itemize}
\item $\pi$ is the blow up of $\Sigma$ along $C$
\item  $i$ is the inclusion of the exceptional divisor of the blow-up
\item $d$ is the double cover, $\pi_i$ are the projections
\item $\varphi$ is the morphism induced by the linear system $|2H-E|$ which gives $\wts$ the structure of a $\P^1$-bundle over $S^2C$; note in particular that $\wts$ is smooth.  
\end{itemize}
We make frequent use of the rank $2$ vector bundle $\E_L=\varphi_*\O(H)=d_*\left(L\boxtimes\O\right)$, and the fact \cite[Proposition 9]{vermeiresecantreg} that $R^i\pi_*\O_{\wts}=H^i(C,\O_C)\tensor\O_C$ for $i\geq1$.
\end{notterm}

\begin{prop}\label{prop: wts trans}
If $C$ is a smooth curve embedded by a line bundle $L$ with $\deg(L)\geq 2g+3$, then $\Sigma$ satisfies $\operatorname{N}_{3,p}$ if and only if $$H^1(\wts,\pi^*\wedge^aM_L(b))\rightarrow H^0(\Sigma,\wedge^aM_L(b)\tensor R^1\pi_*\O_{\wts})$$ is injective for $2\leq a\leq p+1$, $b\geq2$.
\end{prop}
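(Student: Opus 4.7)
The plan is to apply the Leray spectral sequence for $\pi\colon\wts\to\Sigma$ to the pulled-back sheaf $\pi^*\wedge^a M_L(b)$ and identify the displayed map as the edge homomorphism whose kernel is $H^1(\Sigma,\wedge^a M_L(b))$. The equivalence with $N_{3,p}$ then follows immediately from Proposition \ref{basic}.

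First I would verify that $\wedge^a M_L(b)$ is locally free on $\Sigma$. Since $\O_\Sigma(1)$ is the restriction of a line bundle from $\P^n$, it is invertible on $\Sigma$, so the defining sequence of $M_L$ exhibits $M_L$ as a vector bundle on $\Sigma$, and hence so is $\wedge^a M_L(b)$. The projection formula then gives
$$R^q\pi_*\bigl(\pi^*\wedge^a M_L(b)\bigr) \,=\, \wedge^a M_L(b)\otimes R^q\pi_*\O_{\wts}.$$
For $q=0$ this reduces to $\wedge^a M_L(b)$, using $\pi_*\O_{\wts}=\O_\Sigma$ (which holds because $\pi$ is birational and proper onto the normal variety $\Sigma$; normality follows from $\Sigma$ being ACM, cf. \cite{sidver}). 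For $q=1$, Notation \ref{maps} identifies $R^1\pi_*\O_{\wts}$ with $H^1(C,\O_C)\otimes\O_C$.

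Substituting into the five-term low-degree exact sequence of Leray yields
$$0\to H^1(\Sigma,\wedge^a M_L(b))\to H^1(\wts,\pi^*\wedge^a M_L(b))\to H^0\bigl(\Sigma,\wedge^a M_L(b)\otimes R^1\pi_*\O_{\wts}\bigr)\to H^2(\Sigma,\wedge^a M_L(b)).$$
The middle arrow is precisely the map in the statement, and its kernel is $H^1(\Sigma,\wedge^a M_L(b))$. Hence this map is injective for every $2\le a\le p+1$ and $b\ge 2$ if and only if $H^1(\Sigma,\wedge^a M_L(b))=0$ throughout the same range, which by Proposition \ref{basic} is exactly the condition $N_{3,p}$.

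The main obstacle, if any, lies in justifying $\pi_*\O_{\wts}=\O_\Sigma$ and applying the projection formula, but both are standard once normality of $\Sigma$ and local freeness of $\wedge^a M_L(b)$ are in place. No substantial new idea beyond invoking the Leray spectral sequence should be required.
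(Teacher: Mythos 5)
Your proposal is correct and follows exactly the paper's argument: the paper's proof also consists of writing down the five-term exact sequence of the Leray spectral sequence for $\pi$ applied to $\pi^*\wedge^aM_L(b)$, identifying the kernel of the displayed edge map with $H^1(\Sigma,\wedge^aM_L(b))$, and invoking Proposition~\ref{basic}. The extra justifications you supply (local freeness of $\wedge^aM_L(b)$, the projection formula, and $\pi_*\O_{\wts}=\O_\Sigma$ via normality of $\Sigma$) are details the paper leaves implicit but are accurate.
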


\begin{proof}
This follows immediately from the 5-term sequence associated to the Leray-Serre spectral sequence:
$$0\rightarrow H^1(\Sigma,\wedge^aM_L(b))\rightarrow H^1(\wts,\pi^*\wedge^aM_L(b))\rightarrow H^0(\Sigma,\wedge^aM_L(b)\tensor R^1\pi_*\O_{\wts})$$
and Proposition~\ref{basic}.
\qed
\end{proof}

We will need a cohomological result: 
\begin{lemma}\label{quick}
Let $C\subset\P^n$ be a smooth curve embedded by a line bundle $L$ with $\deg(L)\geq 2g+3$.  Then $H^i(\wts,\O_{\wts}(bH-E))=0$ for $i,b\geq1$.
\end{lemma}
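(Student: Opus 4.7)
The plan is to push $\O_{\wts}(bH-E)$ down to $\Sigma$ via $\pi$, reducing the statement to a vanishing for the ideal sheaf of $C$ in $\Sigma$, and to finish this off using the arithmetic Cohen--Macaulay property of $\Sigma$ together with the projective normality of $C \subset \P^n$.

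First I would compute $R^j\pi_*\O_{\wts}(-E)$ by applying $\pi_*$ to the structure sequence $\ses{\O_{\wts}(-E)}{\O_{\wts}}{\O_E}$.  Since $\pi_*\O_{\wts}=\O_\Sigma$ and $\pi_*\O_E = \pi_{1*}\O_{C\times C} = \O_C$, with the induced map being the standard surjection, this gives $\pi_*\O_{\wts}(-E)=\I_{C/\Sigma}$.  For $j\geq 1$, the formula $R^j\pi_*\O_{\wts}=H^j(\O_C)\otimes \O_C$ recalled in the preliminaries combines with K\"unneth for $\pi_1\colon C\times C\to C$ (which yields $R^j\pi_*\O_E=H^j(\O_C)\otimes \O_C$) to identify both higher direct images.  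Since $\pi$ is the blow-up of $\Sigma$ along $C$, one has $E=\pi^{-1}(C)$ scheme-theoretically, so $\pi^{-1}(p)=\pi^{-1}(p)\cap E = \{p\}\times C$ for each $p\in C$; the theorem on cohomology and base change then identifies the natural map $R^j\pi_*\O_{\wts}\to R^j\pi_*\O_E$ fiberwise with the identity on $H^j(C,\O_C)$, hence it is an isomorphism.  The long exact sequence therefore forces $R^j\pi_*\O_{\wts}(-E)=0$ for $j\geq 1$, and the projection formula then gives $R^j\pi_*\O_{\wts}(bH-E)=\I_{C/\Sigma}(b)$ for $j=0$ and zero otherwise, so Leray yields $H^i(\wts,\O_{\wts}(bH-E))\cong H^i(\Sigma,\I_{C/\Sigma}(b))$ for all $i$.

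It remains to show $H^i(\Sigma,\I_{C/\Sigma}(b))=0$ for $i\geq 1$ and $b\geq 1$, which I would read off the long exact sequence of $\ses{\I_{C/\Sigma}}{\O_\Sigma}{\O_C}$ twisted by $\O_\Sigma(b)$.  The ACM property of $\Sigma$ \cite{sidver} gives $H^i(\Sigma,\O_\Sigma(b))=0$ for $0<i<3$ and all $b$; combined with $H^1(C,L^b)=0$ for $b\geq 1$ (since $\deg L\geq 2g+3>2g-2$), this forces $H^2(\Sigma,\I_{C/\Sigma}(b))=0$.  Surjectivity of $H^0(\Sigma,\O_\Sigma(b))\to H^0(C,L^b)$ follows from projective normality of $\Sigma$ (a consequence of ACM) together with normal generation of $L$ on $C$ (valid for $\deg L\geq 2g+1$), so $H^1(\Sigma,\I_{C/\Sigma}(b))=0$.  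Finally $H^3(\Sigma,\I_{C/\Sigma}(b))=H^3(\Sigma,\O_\Sigma(b))=H^0(\Sigma,\omega_\Sigma(-b))^{*}$ by Serre duality, and this vanishes because $\Sigma$ is uniruled (it is swept out by the secant $\P^1$'s, forcing $H^0(\omega_\Sigma)=0$) while $\O_\Sigma(b)$ has sections.

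The step I expect to require the most care is the vanishing $R^j\pi_*\O_{\wts}(-E)=0$ for $j\geq 1$: it rests on the scheme-theoretic identification $\pi^{-1}(p)=\{p\}\times C$ coming from the blow-up structure, together with the compatibility of two a priori independent base-change identifications of the sheaf $H^j(C,\O_C)\otimes \O_C$.
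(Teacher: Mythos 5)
Your strategy --- pushing $\O_{\wts}(bH-E)$ down to $\Sigma$ and reducing to $H^i(\Sigma,\I_{C/\Sigma}(b))=0$ --- is genuinely different from the paper's, which works on the smooth ambient blow-up $\wtp$ of $\P^n$ along $C$: there $H^i(\wtp,\O_{\wtp}(bH-E))=H^i(\P^n,\I_C(b))=0$ comes for free (smooth center in a smooth variety, plus projective normality of $C$), and the ideal sequence of $\wts\subset\wtp$ together with the identification $H^{i+1}(\wtp,\I_{\wts}(bH-E))=H^{i+1}(\P^n,\I_{\Sigma}(b))$ from \cite[2.4(6)]{sidver} finishes the proof in two lines. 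Your route has to recreate the blow-up computation over the \emph{singular} base $\Sigma$, and that is where it breaks down. The vanishing $R^j\pi_*\O_{\wts}(-E)=0$ for $j\geq1$ is exactly the step you flag, and as written it does not go through: $\pi$ is not flat (the fiber dimension jumps over $C$), so the cohomology-and-base-change theorems you invoke do not apply to $\O_{\wts}$, and the fiberwise identification of the map $R^j\pi_*\O_{\wts}\to R^j\pi_*\O_E$ with the identity on $H^j(C,\O_C)$ is not justified. You would also need to know that the scheme-theoretic fiber $\pi^{-1}(p)$ is the reduced curve $\{p\}\times C$, which requires an argument. This can be repaired (by the theorem on formal functions, or by deducing the statement from the analogous one on $\wtp$ --- at which point you are essentially running the paper's proof), but it is a genuine gap as it stands.

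The final step also has a problem. You dispose of $H^3(\Sigma,\O_{\Sigma}(b))$ by Serre duality plus ``uniruled implies $H^0(\omega_\Sigma)=0$.'' For the dualizing sheaf of a \emph{singular} Cohen--Macaulay threefold that implication is not automatic: it holds for $\pi_*\omega_{\wts}$, but $\omega_\Sigma$ can be strictly larger unless the singularities are rational --- and here they are not, since $R^1\pi_*\O_{\wts}=H^1(C,\O_C)\tensor\O_C\neq0$ for $g\geq1$. The conclusion $H^3(\Sigma,\O_\Sigma(b))=0$ for $b\geq1$ is true, but the clean way to get it is from the $5$-regularity of $\I_\Sigma$ \cite{vermeiresecantreg}, which gives $H^3(\Sigma,\O_\Sigma(b))=H^4(\P^n,\I_\Sigma(b))=0$ for $b\geq1$. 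The rest of your cohomology chase on $\Sigma$ ($H^1$ via projective normality of $\Sigma$ and of $C$, $H^2$ via the ACM property and $H^1(C,L^b)=0$) is fine.
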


\begin{proof}
Because $C$ is projectively normal we have $H^i(\wtp,\O_{\wtp}(bH-E))=0$ for $i,b\geq1$.  Thus $H^i(\wts,\O_{\wts}(bH-E))=H^{i+1}(\wtp,\O_{\wtp}(bH-E)\tensor\I_{\wts})$, but by \cite[2.4(6)]{sidver}, we know that $H^{i+1}(\wtp,\O_{\wtp}(bH-E)\tensor\I_{\wts})=H^{i+1}(\P^n,\I_{\Sigma}(b))=0$ for $i\geq0$, $b\in\Z$.  
\qed
\end{proof}

\section{Main Result}

We first reinterpret the injection in Proposition~\ref{prop: wts trans} as a vanishing on $\wts$ (Proposition~\ref{prop: wts van}), then on $S^2C$ (Corollary~\ref{onhilb}), and finally on $C\times C$ (Theorem~\ref{forcurves}).

\begin{prop}\label{prop: wts van}
Let $C\subset\P^n$ be a smooth curve satisfying $N_p$ embedded by a line bundle $L$ with $\deg(L)\geq 2g+3$.  Then $\Sigma$ satisfies $\operatorname{N}_{3,p}$ if $H^i(\wts,\pi^*\wedge^{a-1+i}M_L\tensor\O(2H-E))=0$ for $2\leq a\leq p+1$, $i\geq1$.
\end{prop}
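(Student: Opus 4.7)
The plan is to establish the injectivity of the edge map
\[
f\colon H^1(\wts,\pi^*\wedge^a M_L(b))\to H^0(\Sigma,\wedge^a M_L(b)\tensor R^1\pi_*\O_{\wts})
\]
from Proposition \ref{prop: wts trans}; its kernel, by the $5$-term Leray sequence for $\pi$, is precisely $H^1(\Sigma,\wedge^a M_L(b))$, so it suffices to show this vanishes for $2\leq a\leq p+1$ and $b\geq 2$. The strategy is to bound $\ker f$ by passing to the exceptional divisor $Z$ of $\pi$ and then invoking the hypothesis that $C$ satisfies $N_p$.

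First, tensoring $0\to\O_{\wts}(-E)\to\O_{\wts}\to i_*\O_Z\to 0$ with $\pi^*\wedge^a M_L(bH)$ produces a restriction map
\[
\beta\colon H^1(\wts,\pi^*\wedge^a M_L(bH))\to H^1(Z,\pi^*\wedge^a M_L(bH)|_Z)
\]
whose kernel is a quotient of $H^1(\wts,\pi^*\wedge^a M_L\tensor\O(bH-E))$. To show this latter group vanishes, twist the Koszul sequence
\[
0\to\pi^*\wedge^{a+1}M_L\to\wedge^{a+1}H^0(C,L)\tensor\O_{\wts}\to\pi^*\wedge^a M_L\tensor\O_{\wts}(H)\to 0
\]
by $\O_{\wts}((b-1)H-E)$. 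By Lemma \ref{quick}, the middle term has vanishing $H^j$ for all $j\geq 1$ (since $b-1\geq 1$), so the long exact sequence yields an isomorphism
\[
H^j(\wts,\pi^*\wedge^a M_L\tensor\O(bH-E))\homeq H^{j+1}(\wts,\pi^*\wedge^{a+1}M_L\tensor\O((b-1)H-E))
\]
valid for every $j\geq 1$ and $b\geq 2$. Iterating this isomorphism $b-2$ times identifies $H^1(\wts,\pi^*\wedge^a M_L\tensor\O(bH-E))$ with $H^{b-1}(\wts,\pi^*\wedge^{a+b-2}M_L\tensor\O(2H-E))$, which vanishes by the hypothesis (with $i=b-1$ and $a'=a$). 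Hence $\beta$ is injective.

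Next, functoriality of the Leray $5$-term sequence along the Cartesian square with vertical maps $\pi\colon\wts\to\Sigma$ and $\pi_1\colon Z\to C$ produces a commutative diagram whose top row is the sequence from Proposition \ref{prop: wts trans} and whose bottom row is its analogue for $\pi_1$. Because $H$ is pulled back from $\Sigma$ via $\pi$ and $H|_C=L$, we have $\O_{\wts}(H)|_Z=\pi_1^*L$, and hence $\pi^*\wedge^a M_L(bH)|_Z=\pi_1^*(\wedge^a M_L|_C\tensor L^b)$; in particular the kernel of the bottom edge map $f'$ equals $H^1(C,\wedge^a M_L|_C\tensor L^b)$. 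By base change and the identification from Notation \ref{maps}, both $R^1\pi_*\O_{\wts}$ and $R^1\pi_{1*}\O_Z$ are canonically $H^1(C,\O_C)\tensor\O_C$, making the right-hand vertical of the diagram an isomorphism. A standard diagram chase then shows $\beta(\ker f)\subseteq\ker f'$, and since $\beta$ is injective and $\ker f'=0$ in the required range because $C$ satisfies $N_p$, we conclude $H^1(\Sigma,\wedge^a M_L(b))=\ker f=0$, as needed.

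The main obstacle I expect is the careful verification of the comparison diagram between the two Leray spectral sequences, particularly the confirmation that the right-hand vertical is an isomorphism via base change along $C\hookrightarrow\Sigma$ and the explicit identification of $\O_{\wts}(H)|_Z$. The remaining steps -- the Koszul iteration designed to match the hypothesis, and the standard vanishing of $H^1(C,\wedge^a M_L\tensor L^b)$ under $N_p$ for curves -- are routine once the diagram is in place.
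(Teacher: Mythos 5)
Your proof is correct and follows essentially the same route as the paper: restrict to the exceptional divisor $Z$, use the Koszul-type iteration with Lemma~\ref{quick} to reduce $H^1(\wts,\pi^*\wedge^aM_L(bH-E))$ to the hypothesized vanishing (your exponent $H^{b-1}(\wts,\pi^*\wedge^{a+b-2}M_L\tensor\O(2H-E))$ is the correct one, matching the hypothesis with $i=b-1$; the paper's $H^{b-2}$ is an off-by-one slip), and invoke $N_p$ on $C$ to kill the $H^1(C,\wedge^aM_L\tensor L^b)$ contribution. The only cosmetic difference is that you make the compatibility between the restriction map to $Z$ and the Leray edge map explicit via functoriality of the two five-term sequences, whereas the paper identifies $H^1(Z,\cdot)$ with the Leray target directly via K\"unneth and reads off the kernel as the image of $H^1(\wts,\pi^*\wedge^aM_L(bH-E))$.
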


\begin{proof}
We use Proposition~\ref{prop: wts trans}.  Consider the sequence on $\wts$
$$\ses{\pi^*\wedge^aM_L(bH-E)}{\pi^*\wedge^aM_L(bH)}{\pi^*\wedge^aM_L(bH)\tensor\O_Z}.$$
We know \begin{eqnarray*}
H^1(Z,\pi^*\wedge^aM_L(bH)\tensor\O_{Z})&=&H^1\left(Z,\left(\wedge^aM_L\tensor L^b\right)\boxtimes \O_C\right)\\
&=&H^1(C,\O_C)\tensor H^0(C,\wedge^aM_L\tensor L^b)\\
&=&H^0(\Sigma,\wedge^aM_L(b)\tensor R^1\pi_*\O_{\wts}).
\end{eqnarray*}
The first equality follows as the restriction of $\pi^*\wedge^aM_L(bH)$ to $Z$ is $\wedge^aM_L(bH) \boxtimes \O_C$.  For the second we use the K\"unneth formula together with the fact that $h^1(C, \wedge^aM_L \otimes L^b)=0$ as $C$ satisfies $N_{p}$ \cite{mgreen}.  The third is the last part of \ref{maps}.

Thus $$h^1(\Sigma,\wedge^aM_L(b))=\operatorname{Rank}\left(H^1(\wts,\pi^*\wedge^aM_L(bH-E))\rightarrow H^1(\wts,\pi^*\wedge^aM_L(bH))\right)$$
and so by Proposition~\ref{prop: wts trans} it is enough to show that $H^1(\wts,\pi^*\wedge^aM_L\tensor\O(bH-E))=0$ for $2\leq a\leq p+1$, $b\geq2$.

From the sequence 
$$\ses{\pi^*\wedge^{a+1}M_L\tensor\O(bH-E)}{\wedge^{a+1}\Gamma\tensor\O(bH-E)}{\pi^*\wedge^aM_L\tensor\O((b+1)H-E)}$$
and the fact (Lemma~\ref{quick}) that $H^i(\wts,\O(bH-E))=0$, we see that $H^1(\wts,\pi^*\wedge^{a}M_L\tensor\O(bH-E))=H^{b-2}(\wts,\pi^*\wedge^{a+b-2}M_L\tensor\O(2H-E))$ for $b\geq2$.
\qed
\end{proof}



\begin{lemma}\label{downtohilb}
Let $C\subset\P^n$ be a smooth curve embedded by a line bundle $L$ with $\deg(L)\geq 2g+3$ and consider the morphism $\varphi:\wts\rightarrow S^2C\subset\P^s$ induced by the linear system $|2H-E|$.  Then $\varphi_*\wedge^aM_L=\wedge^aM_{\mathcal{E_L}}$, and hence $H^i(\wts,\pi^*\wedge^aM_L\tensor\O(2H-E))=H^i(S^2C,\wedge^aM_{\E_L}\tensor \O_{S^2C}(1))$.
\end{lemma}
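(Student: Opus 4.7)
The plan is to read the lemma as a projection formula computation for the $\P^1$-bundle $\varphi:\wts\to S^2C$. First I would compare the two evaluation sequences that are available on $\wts$. Pulling back the defining sequence of $M_{\E_L}$ gives $\ses{\varphi^*M_{\E_L}}{\Gamma(C,L)\tensor\O_{\wts}}{\varphi^*\E_L}$, while the pullback of the defining sequence of $M_L$ on $\Sigma$ (using linear normality to identify $\Gamma(\Sigma,\O_{\Sigma}(1))=\Gamma(C,L)$) gives $\ses{\pi^*M_L}{\Gamma(C,L)\tensor\O_{\wts}}{\O(H)}$. Since $\wts=\P(\E_L)$ with $\O(H)$ the relative $\O(1)$, there is a tautological surjection $\varphi^*\E_L\to\O(H)$; invoking the canonical identifications $\Gamma(S^2C,\E_L)=\Gamma(C,L)=\Gamma(\wts,\O(H))$ (the last two from the definition of $\E_L=\varphi_*\O(H)$ and from $\pi$-linear normality) forces the square joining the two sequences to commute. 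The snake lemma then yields
$$\ses{\varphi^*M_{\E_L}}{\pi^*M_L}{\L}$$
on $\wts$, where $\L=\ker(\varphi^*\E_L\to\O(H))=\varphi^*(\det\E_L)\tensor\O(-H)$.

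Next, because $\L$ is a line bundle, taking $a$th exterior powers produces
$$\ses{\wedge^a\varphi^*M_{\E_L}}{\wedge^a\pi^*M_L}{\wedge^{a-1}\varphi^*M_{\E_L}\tensor\L}$$
for each $a\geq 1$. Pushing forward by $\varphi$ and using the projection formula, the leftmost term pushes forward to $\wedge^aM_{\E_L}$ (with vanishing $R^{\geq 1}\varphi_*$, since $\varphi$ is a $\P^1$-bundle), while $R^i\varphi_*(\wedge^{a-1}\varphi^*M_{\E_L}\tensor\L)=\wedge^{a-1}M_{\E_L}\tensor\det\E_L\tensor R^i\varphi_*\O(-H)$ vanishes for every $i$ since $R^i\varphi_*\O_{\P(\E_L)}(-1)=0$. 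The resulting long exact sequence of direct images then forces $\varphi_*\wedge^a\pi^*M_L=\wedge^aM_{\E_L}$ together with $R^i\varphi_*\wedge^a\pi^*M_L=0$ for $i\geq1$.

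The cohomological identification follows by one more application of the projection formula: since $\varphi$ is defined by $|2H-E|$ we have $\O(2H-E)=\varphi^*\O_{S^2C}(1)$, so the higher direct images above give $\varphi_*(\pi^*\wedge^aM_L\tensor\O(2H-E))=\wedge^aM_{\E_L}\tensor\O_{S^2C}(1)$ with vanishing higher $R^i\varphi_*$, and the Leray spectral sequence degenerates to yield the claimed equality of cohomology groups. The only real point requiring care is the first step, namely checking that the two evaluation maps into $\O(H)$ used to build the snake-lemma diagram genuinely coincide rather than differ by a scalar automorphism; everything after that is standard $\P^1$-bundle bookkeeping.
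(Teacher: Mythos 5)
Your argument is correct and follows essentially the same route as the paper: both compare the pulled-back evaluation sequences of $M_{\E_L}$ and $M_L$ on $\wts$, extract the short exact sequence $0\to\varphi^*M_{\E_L}\to\pi^*M_L\to K\to 0$ with $K$ the kernel of the tautological surjection $\varphi^*\E_L\to\O(H)$, take exterior powers, and push forward by $\varphi$. The only (harmless) difference is that you identify $K=\varphi^*(\det\E_L)\tensor\O(-H)$ and quote $R^i\varphi_*\O(-H)=0$ directly, whereas the paper deduces $R^i\varphi_*K=0$ from the right-hand column of its diagram.
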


\begin{proof}
Consider the diagram on $\wts$:
\begin{center}
{\begin{minipage}{1.5in}
\diagram
 &  &  & 0\dto & \\
 & 0\dto & 0\dto & K\dto & \\
0\rto &  \varphi^*M_{\E_L}\dto\rto & \Gamma(S^2C,\E_L)\otimes \O_{\wts}\dto\rto & \varphi^*\E_L\dto\rto & 0 \\
0\rto &  \pi^*M_{L}\dto\rto & \Gamma(C,L)\otimes \O_{\wts} \dto\rto & \pi^*L\dto\rto & 0 \\
 &  K\dto & 0 & 0 &  \\
 & 0 &  &  &
\enddiagram
\end{minipage}}
\end{center}
The vertical map in the middle is surjective as we have $\Gamma(S^2C, \E_L) = \Gamma(\wts,\O(H)) = \Gamma(C\times C,L\boxtimes\O) = \Gamma(C, L)$.  Therefore, surjectivity of the lower right horizontal map and commutativity of the diagram show that the righthand vertical map is surjective. 

Note that $R^i\varphi_*\varphi^*\E_L = \E_L \otimes R^i \varphi_*\O_{\wts}$ by the projection formula and that the higher direct image sheaves $R^i \varphi_*\O_{\wts}$ vanish as $\wts$ is a $\PP^1$-bundle over $S^2C$. For the higher direct images, we have $R^i \varphi_*\pi^*L=0$ as the restriction of $L$ to a fiber of $\varphi$ is $\O(1)$ and hence the cohomology along the fibers vanishes.  From the rightmost column, we see $R^i\varphi_*K=0$. From the leftmost column, we have the sequence
$$\ses{\varphi^*\wedge^aM_{\E_L}}{\pi^*\wedge^aM_{L}}{\varphi^*\wedge^{a-1}M_{\E_L}\tensor K}$$
but as $R^i\varphi_*\left(K\tensor\varphi^*\wedge^{a-1}M_{\E_L}\right)=R^i\varphi_*K\tensor\wedge^{a-1}M_{\E_L}=0$, we have $\varphi_*\wedge^aM_L=\wedge^aM_{\mathcal{E}_L}$.
\qed
\end{proof}

Combining Proposition~\ref{prop: wts van} with Lemma~\ref{downtohilb} yields:

\begin{cor}\label{onhilb}
Let $C\subset\P^n$ be a smooth curve satisfying $N_p$ embedded by a line bundle $L$ with $\deg(L)\geq 2g+3$.
Then $\Sigma$ satisfies $\operatorname{N}_{3,p}$ if $$H^i(S^2C,\wedge^{a-1+i}M_{\E_L}\tensor\O(1))=0$$ for $2\leq a\leq p+1$, $i\geq1$.\qed
\end{cor}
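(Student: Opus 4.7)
The plan is to chain the two preceding results, with the corollary amounting to a change of ambient space in the cohomological reduction. By Proposition~\ref{prop: wts van}, under the given hypotheses on $C$ and $L$, $\Sigma$ satisfies $N_{3,p}$ provided
$$H^i(\wts,\pi^*\wedge^{a-1+i}M_L\tensor\O(2H-E))=0,\quad 2\leq a\leq p+1,\ i\geq1.$$
The $N_p$ hypothesis on $C$ has already been spent at this step, through Green's vanishing of $H^1(C,\wedge^aM_L\tensor L^b)$ in the identification of the boundary term on $Z=C\times C$; the degree assumption $\deg(L)\geq 2g+3$ has likewise been spent, both to invoke Proposition~\ref{basic} and Lemma~\ref{quick} and to ensure that the morphism $\varphi:\wts\to S^2C$ has the geometric form described in \ref{maps}.

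Next, I would apply Lemma~\ref{downtohilb}, which supplies the identification
$$H^i(\wts,\pi^*\wedge^bM_L\tensor\O(2H-E))=H^i(S^2C,\wedge^bM_{\E_L}\tensor\O_{S^2C}(1))$$
for every nonnegative $b$ and $i$. This comes from the fact that $\varphi:\wts\to S^2C$ is a $\P^1$-bundle along which $\pi^*\wedge^bM_L$ pushes forward to $\wedge^bM_{\E_L}$ with vanishing higher direct images, together with the identification $\O_{\wts}(2H-E)\cong\varphi^*\O_{S^2C}(1)$ built into the definition of $\varphi$ via the linear system $|2H-E|$.

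Specializing this identification to $b=a-1+i$ and substituting into the sufficient condition from Proposition~\ref{prop: wts van} converts the vanishing hypothesis on $\wts$ into exactly
$$H^i(S^2C,\wedge^{a-1+i}M_{\E_L}\tensor\O_{S^2C}(1))=0,\quad 2\leq a\leq p+1,\ i\geq1,$$
which is the condition asserted in the corollary. Since the substantive work has already been carried out in Propositions~\ref{basic} and~\ref{prop: wts van} together with Lemma~\ref{downtohilb}, there is no genuine obstacle here; the only thing to verify is the bookkeeping of the index shift $a\mapsto a-1+i$ and the observation that the two ingredients compose cleanly over the range of $a$ and $i$ in question.
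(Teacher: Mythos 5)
Your proposal is correct and is exactly the paper's argument: the paper presents this corollary as the immediate combination of Proposition~\ref{prop: wts van} with Lemma~\ref{downtohilb}, with no further proof given. Your bookkeeping of the index $a-1+i$ and the role of $|2H-E|$ matches what the paper intends.
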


We need a technical lemma, completely analogous to \cite[1.4.1]{laz}.

\begin{lemma}\label{belikerob}
Let $X\subset\P^n$ be a smooth curve embedded by a non-special line bundle $L$ satisfying $N_{2,2}$, let $x_1,\cdots,x_{n-2}$ be a general collection of distinct points, and let $D=x_1+\cdots+x_{n-2}$. Then there is an exact sequence of vector bundles on $X\times X$ $$\ses{L^{-1}(D)\boxtimes L^{-1}(D)(\Delta)}{d^*M_{\E_L}}{\displaystyle \bigoplus_i\left(\O(-x_i)\boxtimes\O(-x_i)\right)}$$
\end{lemma}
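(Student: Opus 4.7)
The plan is to apply a symmetric-product analogue of Lazarsfeld's construction \cite[1.4.1]{laz} to the rank-$2$ bundle $\E_L$ on $S^2X$, and then pull the resulting sequence back to $X\times X$ via the double cover $d$. Set $V_D:=H^0(X,L(-D))\subseteq H^0(X,L) = \Gamma(S^2X,\E_L)$; non-specialness of $L$ together with the genericity of $D$ give $\dim V_D = 3$, and the inclusion $L(-D)\hookrightarrow L$ on $X$ pushes forward to an inclusion of rank-$2$ vector bundles $\E_{L(-D)}\hookrightarrow\E_L$ on $S^2X$ whose cokernel is $d_*(L|_D\boxtimes\O) = \bigoplus_i L_{x_i}\otimes\O_{D_i'}$, with $D_i':=d(\{x_i\}\times X)\subset S^2X$.

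Next, I form the $3\times 3$ diagram on $S^2X$ whose top two rows are the kernel-of-evaluation sequences
$$0\to M_{\E_{L(-D)}}\to V_D\otimes\O\to\E_{L(-D)}\to 0,\qquad 0\to M_{\E_L}\to H^0(L)\otimes\O\to\E_L\to 0,$$
connected vertically by the inclusions $V_D\hookrightarrow H^0(L)$ and $\E_{L(-D)}\hookrightarrow\E_L$. The surjectivity of $V_D\otimes\O\to\E_{L(-D)}$ is where the $N_{2,2}$ hypothesis and the genericity of $D$ enter, via global generation of $\E_{L(-D)}$. Since $\dim V_D=3$ and $\operatorname{rank}(\E_{L(-D)})=2$, the sheaf $M_{\E_{L(-D)}}$ is a line bundle with $\det M_{\E_{L(-D)}} = \det(\E_{L(-D)})^{-1}$. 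The snake lemma, combined with the observation that $(H^0(L)/V_D)\otimes\O\to d_*(L|_D\boxtimes\O)$ decomposes summand-wise as the restrictions $L_{x_i}\otimes\O\to L_{x_i}\otimes\O_{D_i'}$, produces
$$0\to M_{\E_{L(-D)}}\to M_{\E_L}\to\bigoplus_{i=1}^{n-2}\I_{D_i'}\to 0$$
on $S^2X$.

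Pulling this sequence back by the flat map $d$, I identify each term. Since $D_i'$ is not contained in the branch divisor $\delta\subset S^2X$, we have $d^{-1}(D_i')=\{x_i\}\times X+X\times\{x_i\}$ as a Cartier divisor, so $d^*\I_{D_i'}=\O(-x_i)\boxtimes\O(-x_i)$. For the subsheaf, the norm formula $\det(d_*\F)=N(\F)\otimes\det(d_*\O)$ combined with $d^*\det(d_*\O)=\O(-\Delta)$ (which follows from $\det(d_*\O)^{\otimes 2}=\O_{S^2X}(-\delta)$ and $d^*\delta=2\Delta$) yields $d^*\det\E_{L(-D)}=(L(-D)\boxtimes L(-D))(-\Delta)$, and hence $d^*M_{\E_{L(-D)}}=L^{-1}(D)\boxtimes L^{-1}(D)(\Delta)$, exactly matching the stated subsheaf.

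The main technical obstacle I anticipate is verifying the global generation of $\E_{L(-D)}$, i.e., that $V_D\to L(-D)_a\oplus L(-D)_b$ is surjective at every $\{a+b\}\in S^2X$. Since $\deg L(-D)=g+2$, this amounts to $L(-D)$ separating (possibly infinitesimally near) pairs of points, and is where the full strength of the $N_{2,2}$ hypothesis is needed; the remaining steps, especially the determinant computation on the ramified double cover, are routine bookkeeping with $\delta$ and $\Delta$.
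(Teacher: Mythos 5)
Your construction is at bottom the same as the paper's: the paper runs the induction of \cite[1.4.1]{laz} one point at a time on $X\times X$, peeling off a summand $\O(-x_i)\boxtimes\O(-x_i)$ at each step and identifying the final rank-one kernel as $\wedge^2\E^*_{L(-D)}$, whereas you perform all $n-2$ elementary modifications at once on $S^2X$ and pull back by $d$ only at the end. Your identifications $d^*\I_{D_i'}=\O(-x_i)\boxtimes\O(-x_i)$ and $d^*\det\E_{L(-D)}=\left(L(-D)\boxtimes L(-D)\right)(-\Delta)$ are correct and agree with the paper's; the difference is organizational, not substantive. The substantive issue is the step you explicitly defer: surjectivity of $V_D\otimes\O\rightarrow\E_{L(-D)}$, i.e.\ global generation of the rank-two bundle $\E_{L(-D)}$ by its three sections, which is equivalent to very ampleness of the degree-$(g+2)$ line bundle $L(-D)$. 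You assert that this is where $N_{2,2}$ and genericity enter, but you give no argument, so the proof is incomplete at exactly the load-bearing point. (Left-exactness and the identification of the rank-one kernel with $\det\E_{L(-D)}^{-1}$ do survive without surjectivity, since that kernel is reflexive of rank one on a smooth surface and a cokernel supported in codimension two does not change $c_1$; what is at stake is exactness at $\bigoplus_i\I_{D_i'}$, which is precisely what the application in Theorem~\ref{forcurves} uses.)

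Moreover, this step does not appear to be reparable by genericity of $D$ or by any hypothesis on the embedding $L$. Global generation of $\E_{L(-D)}$ at $a+b$ amounts to $h^0(K-L+D+a+b)=0$, and $\deg(K-L+D+a+b)=g-2$; so generation fails exactly where the translate $(K-L+D)+W_2$ meets $W_{g-2}$ inside $\picard^{g-2}(X)$. By Poincar\'e's formula $[W_2]\cdot[W_{g-2}]=\binom{g}{2}$, so for $g\geq 2$ every translate of $W_2$ meets $W_{g-2}$, and $\E_{L(-D)}$ fails to be globally generated at some point of $S^2X$ no matter how the $x_i$ are chosen. You should be aware that the paper's own proof is equally silent here---after running the induction it simply asserts that $d^*M_{\E_{L(-D)}}$ is a line bundle and that one checks it equals $\wedge^2\E^*_{L(-D)}$---so you have correctly isolated the one step that needs genuine work, but neither your argument nor the paper's closes it, and for $g\geq 2$ the surjectivity as stated seems to fail at finitely many points of $S^2X$.
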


\begin{proof}
Choose a general point $x_1\in X$ and consider the following diagram on $X\times X$:
\begin{center}
{\begin{minipage}{1.5in}
\diagram
 & 0\dto & 0\dto & 0\dto & \\
0\rto & d^*M_{\E_{L(-x_1)}}\dto\rto & M_{L(-x_1)}\boxtimes \O\dto\rto & \left(\O\boxtimes L(-x_1)\right)(-\Delta)\dto\rto & 0\\
0\rto &  d^*M_{\E_{L}}\dto\rto & M_L\boxtimes\O \dto\rto & \O\boxtimes L(-\Delta)\dto\rto & 0 \\
0\rto &  \O(-x_1)\boxtimes\O(-x_1)\dto\rto & \O(-x_1)\boxtimes\O\dto\rto & \O\boxtimes\left(L\tensor\O_{x_1}\right) (-\Delta)\dto\rto & 0 \\
 &  0 & 0 & 0 & 
\enddiagram
\end{minipage}}
\end{center}
where the center column comes from \cite[1.4.1]{laz}.  Following just as in that proof, we obtain 
$$\ses{d^*M_{\E_{L(-D)}}}{d^*M_{\E_L}}{\displaystyle \bigoplus_i\left(\O(-x_i)\boxtimes\O(-x_i)\right)}$$
from the left column.  Note however that $d^*M_{\E_{L(-D)}}$ is a line bundle, and one checks that $d^*M_{\E_{L(-D)}}=\wedge^2\E^*_{L(-D)}=L^{-1}(D)\boxtimes L^{-1}(D)(\Delta)$.
\qed
\end{proof}

\begin{thm}\label{forcurves}
Let $C\subset\P^n$ be a smooth curve embedded by a line bundle $L$ with $\deg(L)\geq 2g+p+3$, $p\geq0$.  Then $\Sigma$ satisfies $\operatorname{N}_{3,p}$.
\end{thm}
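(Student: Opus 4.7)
The plan is to use Corollary~\ref{onhilb} as a reduction, then to pull the resulting vanishing on $S^2C$ back to $C\times C$ via the double cover $d$, and finally to attack the pulled-back statement through the Koszul-type filtration supplied by Lemma~\ref{belikerob}. Since $\deg L\geq 2g+p+3\geq 2g+p+1$, Green's theorem ensures $C$ satisfies $N_p$, so by Corollary~\ref{onhilb} it is enough to prove
$$H^i(S^2C,\wedge^{a-1+i}M_{\E_L}\otimes\O_{S^2C}(1))=0\qquad (2\leq a\leq p+1,\ i\geq 1).$$
Only $i=1,2$ are non-trivial since $\dim S^2C=2$. In characteristic zero, $\O_{S^2C}$ is a direct summand of $d_*\O_{C\times C}$, and hence by the projection formula $H^i(S^2C,\F)$ is a direct summand of $H^i(C\times C, d^*\F)$. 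It therefore suffices to verify
$$H^i(C\times C,\wedge^k d^*M_{\E_L}\otimes d^*\O_{S^2C}(1))=0$$
for $k=a-1+i$ in the prescribed range; the twist $d^*\O_{S^2C}(1)$ can be read off from $\varphi^*\O_{S^2C}(1)=\O_{\wts}(2H-E)$ by restricting to $Z$.

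Next, I would feed Lemma~\ref{belikerob} into this reduction. For a general $D=x_1+\cdots+x_{n-2}$ that lemma gives $0\to A\to d^*M_{\E_L}\to Q\to 0$ with $A=L^{-1}(D)\boxtimes L^{-1}(D)(\Delta)$ a line bundle and $Q=\bigoplus_i \O(-x_i)\boxtimes\O(-x_i)$ a direct sum of line bundles. Since $A$ is of rank one, the standard filtration on wedge powers collapses to the Koszul-type short exact sequence
$$0\to A\otimes\wedge^{k-1}Q\to \wedge^k d^*M_{\E_L}\to \wedge^k Q\to 0.$$
Tensoring by $d^*\O_{S^2C}(1)$ and passing to the long exact sequence reduces the desired vanishing to $H^i$-vanishing for the two outer terms, each of which is a direct sum of line bundles on $C\times C$ of the form $\F\boxtimes\G$ (from $\wedge^k Q$) or $\F\boxtimes\G(\Delta)$ (from $A\otimes\wedge^{k-1}Q$), where $\F,\G$ are explicit line bundles on $C$ built from $L$, $D$, subdivisors of $D$, and the twist $d^*\O_{S^2C}(1)$.

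The final step is line-bundle cohomology on $C\times C$. Untwisted boxes are handled directly by K\"unneth, and the $\Delta$-twisted ones by the exact sequence
$$0\to \F\boxtimes\G\to \F\boxtimes\G(\Delta)\to (\F\otimes\G\otimes T_C)|_\Delta\to 0,$$
using $\O_\Delta(\Delta)\cong T_C$ under $\Delta\cong C$, so that each vanishing reduces to $H^0$ and $H^1$ of explicit line bundles on the curve. Setting $n=\deg L-g$ and $k\leq p+2$, a direct degree count shows that under $\deg L\geq 2g+p+3$ every such line bundle has degree at least $2g-1$, and the required vanishings follow from Serre duality on $C$. The main obstacle, and the place where the bound $2g+p+3$ (rather than anything weaker) becomes essential, is managing the symmetry-breaking $\Delta$-twist in $A$: its summands pair a $\Delta$-twisted side against an untwisted one, and the tightest numerics occur at the top of the filtration ($k=p+2$), where the book-keeping of the various twists has to be carried out carefully to confirm the degree margin.
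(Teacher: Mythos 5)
Your overall architecture (reduce via Corollary~\ref{onhilb}, descend to $C\times C$, filter by Lemma~\ref{belikerob}, finish with line-bundle cohomology) matches the paper, but your descent step introduces a gap that the rest of the argument cannot absorb. You pass from $S^2C$ to $C\times C$ by noting that $H^i(S^2C,\F)$ is a direct summand of $H^i(C\times C,d^*\F)$, and so aim to kill $H^i\bigl(C\times C,\wedge^k d^*M_{\E_L}\tensor d^*\O_{S^2C}(1)\bigr)$. But $d^*\O_{S^2C}(1)=L\boxtimes L(-2\Delta)$ (the paper records $d^*\O_{S^2C}(1)\tensor M^*=L\boxtimes L(-\Delta)$ with $d^*M^*=\O(\Delta)$), so your target carries a $(-2\Delta)$-twist. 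The paper's target, reached by pushing the Koszul-type sequence $\ses{\wedge^ad^*M_{\E_L}}{\wedge^aM_{L\boxtimes\O}}{\wedge^{a-1}d^*M_{\E_L}\tensor K}$ down to $S^2C$ and twisting by $\O(1)\tensor M^*$ so that $\wedge^aM_{\E_L}(1)$ appears as a direct summand, is $H^i(Z,\wedge^kd^*M_{\E_L}\tensor L\boxtimes L(-\Delta))$, with only one $\Delta$. Equivalently, your reduction demands the additional vanishing $H^i(S^2C,\wedge^kM_{\E_L}(1)\tensor M)=0$, which is strictly more than Corollary~\ref{onhilb} requires and is not established anywhere.

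The extra $\Delta$ is fatal to your closing degree count. Feeding $L\boxtimes L(-2\Delta)$ into the filtration of Lemma~\ref{belikerob}, the quotient terms become $F\boxtimes F(-2\Delta)$ with $\deg F=\deg L-k$, which is $2g+1$ at the extreme $k=p+2$, $\deg L=2g+p+3$. From $\ses{F\boxtimes F(-2\Delta)}{F\boxtimes F(-\Delta)}{F^2\tensor K_C}$ one sees that $H^1(F\boxtimes F(-2\Delta))=0$ forces surjectivity of the Gaussian--Wahl map $\wedge^2H^0(F)\rightarrow H^0(F^2\tensor K_C)$; this is not a consequence of Serre duality or of $\deg F\geq 2g-1$, and it fails at this degree (already for $g=1$, $\deg F=3$ the source has dimension $3$ and the target dimension $6$). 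The subobject terms fare no better: with your twist they become $G\boxtimes G(-\Delta)$ with $\deg G$ as low as $g$, and $H^1$ then contains the cokernel of $H^0(G)\tensor H^0(G)\rightarrow H^0(G^2)$, which is nonzero. Note also that your blanket claim that every relevant line bundle has degree at least $2g-1$ is false even in the paper's version: the subobject terms there are $G\boxtimes G$ with $\deg G=g$ at the extreme, and their $H^1$ dies only because the points $x_i$ are general. The repair is exactly the paper's more careful descent, which lands on the $(-\Delta)$-twisted vanishing where the numerics close.
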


\begin{proof}
First note \cite{mgreen} that such a curve satisfies $N_{p+2}$.  We verify the condition in Corollary~\ref{onhilb}.
Pulling the sequence on $S^2C$
$$\ses{M_{\E_L}}{\Gamma(S^2C,\E_L)}{\E_L}$$
back to $Z=C\times C$ yields the diagram
\begin{center}
{\begin{minipage}{1.5in}
\diagram
 &  &  & 0\dto & \\
 & 0\dto & 0\dto & K\dto & \\
0\rto &  d^*M_{\E_L}\dto\rto & d^*\Gamma(S^2C, \E_L)\dto\rto & d^*\E_L\dto\rto & 0 \\
0\rto &  M_{L\boxtimes\O}\dto\rto & \Gamma(C,L)\dto\rto & L\boxtimes\O\dto\rto & 0 \\
 &  K\dto & 0 & 0 &  \\
 & 0 &  &  &
\enddiagram
\end{minipage}}
\end{center}
As in \cite{sidver} we have $d_*\O_{Z}=\O_{S^2C}\oplus M$ where $d^*M=\O(-\Delta)$, $d_*K=\E_L\tensor M$, and $K=d^*\wedge^2\E_L\tensor \left(L^*\boxtimes\O\right)=\O\boxtimes L(-\Delta)$.  From the left vertical sequence we have
$$\ses{\wedge^ad^*M_{\E_L}}{\wedge^aM_{L\boxtimes\O}}{\wedge^{a-1}d^*M_{\E_L}\tensor K}$$
and pushing down to $S^2C$ yields
$$\ses{\wedge^aM_{\E_L}\oplus \left(\wedge^aM_{\E_L}\tensor M\right)}{d_*\wedge^aM_{L\boxtimes\O}}{\wedge^{a-1}M_{\E_L}\tensor \E_L\tensor M}$$
Twisting this sequence by $\O_{S^2C}(1)\tensor M^*$ gives
$$\ses{\wedge^aM_{\E_L}(1)\tensor M^*\oplus \wedge^aM_{\E_L}(1)}{\O_{S^2C}(1)\tensor M^*\tensor d_*\wedge^aM_{L\boxtimes\O}}{\wedge^{a-1}M_{\E_L}(1)\tensor \E_L}$$
Since $d^*\O_{\H}(1)\tensor M^*=L\boxtimes L\tensor\O(-\Delta)$, it suffices to show that $$H^i(Z,\wedge^{a-1+i}d^*M_{\E_L}\tensor L\boxtimes L\tensor\O(-\Delta))=0$$ for $2\leq a\leq p+1$, $i=1,2$.

Now, by Lemma~\ref{belikerob} we have exact sequences
$$\ses{\wedge^{r-1}Q\tensor\O(D)\boxtimes\O(D)}{\wedge^rd^*M_{\E_L}\tensor L\boxtimes L\tensor\O(-\Delta)}{\wedge^rQ\tensor L\boxtimes L\tensor\O(-\Delta)}$$
where $Q=\displaystyle \bigoplus_i\left(\O(-x_i)\boxtimes\O(-x_i)\right)$.  

On the right, we have a direct sum of vector bundles of the form $F\boxtimes F(-\Delta)$ where $F$ is a line bundle of degree $\deg(L)-r$.  Thus $H^1$ and $H^2$ of the right side will vanish when $\deg(L)-r\geq 2g+1$.

On the left, we have a direct sum of vector bundles of the form $F\boxtimes F$ where $F$ is a line bundle of degree $n-2-(r-1)=\deg(L)-g-r-1$.  Because $x_1,\cdots,x_{n-2}$ are general, $H^1$ and $H^2$ of the left side will vanish when $\deg(L)-g-r-1\geq g$.  Combining these, we see that $H^i(Z,\wedge^{a-1+i}d^*M_{\E_L}\tensor L\boxtimes L\tensor\O(-E_{\Delta}))=0$ for $2\leq a\leq p+1$, $i=1,2$ as long as $\deg(L)\geq 2g+p+3$.

\qed
\end{proof}

\section{Acknowledgments}
This project grew out of work done together with Jessica Sidman, and has benefited greatly from her insight and input, as well as from her comments regarding preliminary drafts of this work.

\end{document}